\newtheorem{theorem}{Theorem}
\newtheorem{lemma}[theorem]{Lemma}
\newtheorem*{theorem*}{Theorem}
\theoremstyle{definition}
\numberwithin{theorem}{section}
\numberwithin{equation}{section}
\renewcommand{\P}{\mathbf{P}}
\renewcommand{\epsilon}{\varepsilon}
\renewcommand{\(}{\left(}
\renewcommand{\)}{\right)}
\newcommand{\floor}[1]{\left\lfloor#1\right\rfloor}
\newcommand{\Cay}{\operatorname{Cay}}
\begin{document}

\title{The trivial lower bound for the girth of $S_n$}

\author{Sean Eberhard}
\address{Sean Eberhard, London, UK}
\email{eberhard.math@gmail.com}

\begin{abstract}
Consider the Cayley graph of $S_n$ generated by a random pair of elements $x,y$. Conjecturally, the girth of this graph is $\Omega(n \log n)$ with probability tending to $1$ as $n\to\infty$. We show that it is at least $\Omega(n^{1/3})$.
\end{abstract}

\maketitle

\section{Introduction}

Given a group $G$ and a symmetric set $S\subset G$, the \emph{Cayley graph} $\Cay(G, S)$ is the graph with vertex set $G$, and $g$ joined to $gs$ for each $g\in G$ and $s \in S$. Cayley graphs are particularly interesting for being regular and often of large girth (length of the shortest cycle). Note that loops in $\Cay(G,S)$ are essentially the same as relations among the elements of $S$. In particular, when $S = \{x, x^{-1}, y,y^{-1}\}$, with $x,x^{-1},y,y^{-1}$ distinct, then the girth of $\Cay(G,S)$ is the same as the length of the shortest nontrivial word $w\in F_2$ such that $w(x,y) = 1$.

Let $S_n$ be the symmetric group of degree $n$ and let $S = \{x,x^{-1},y,y^{-1}\}$ with $x,y \in S_n$ chosen uniformly at random. It follows from the basic Moore bound in graph theory that the girth of $\Cay(S_n,S)$ is at most $O(n \log n)$. Conjecturally, this bound is tight. The main claim in this note is the following.

\begin{theorem}\label{main-thm}
With high probability the girth of $\Cay(S_n,S)$ is at least $\Omega(n^{1/3})$.
\end{theorem}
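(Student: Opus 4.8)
The plan is to run a first-moment union bound over all possible short relations. Since the girth of $\Cay(S_n,S)$ equals the length of the shortest nontrivial reduced word $w\in F_2$ with $w(x,y)=1$, it suffices to prove that $\sum_{\ell<L}\sum_{|w|=\ell}\Pr[w(x,y)=1]\to0$, where the inner sum runs over reduced words of length $\ell$ and $L=c\,n^{1/3}$ for a small constant $c>0$. There are at most $4\cdot 3^{\ell-1}$ reduced words of length $\ell$, so everything reduces to a single-word bound on $\Pr[w(x,y)=1]$ that is uniform in $w$ and decays fast enough to beat $3^{\ell}$. First I would reduce to cyclically reduced $w$: if $w=gvg^{-1}$ with $v$ cyclically reduced, then $w(x,y)$ is conjugate to $v(x,y)$, so $w(x,y)=1$ iff $v(x,y)=1$ with $|v|\le|w|$; the extremal relation is cyclically reduced, and cyclic reducedness is precisely what will make the closing step of each trajectory below an honest fresh constraint.

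The heart of the argument is the single-word estimate, which I would prove by the principle of deferred decisions, exposing $x$ and $y$ lazily while tracing trajectories. Writing $w=s_1\cdots s_\ell$ with $s_i\in\{x^{\pm1},y^{\pm1}\}$, the relation $w(x,y)=1$ asserts that for every point $p$ the trajectory $p=p_0,p_1,\dots,p_\ell$ obtained by applying $s_\ell,s_{\ell-1},\dots,s_1$ in turn satisfies $p_\ell=p_0$. I would expose only the first $\ell-1$ steps of these trajectories, one starting point at a time, and retain a starting point exactly when its partial trajectory is self-avoiding and edge-disjoint from all previously retained ones. The key combinatorial fact, which holds for \emph{every} word and makes the bound uniform, is that a fixed directed edge of $x$ or $y$ lies on the length-$\ell$ $w$-trajectory of at most $\ell$ starting points (walk the word backwards from the edge through its at most $\ell$ possible positions). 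Hence after retaining $R$ pairwise edge-disjoint trajectories we have used at most $R\ell$ edges and thereby blocked at most $R\ell^2$ future starting points, so greedily we can retain $m\ge n/(2\ell^2)$ of them. For each retained trajectory the closing edge is, by self-avoidance together with cyclic reducedness, a value of $x$ or $y$ not yet exposed, so conditioned on everything else it is uniform over at least $n/2$ targets and hits the required endpoint $p_0$ with probability at most $2/n$; the $m$ closing edges sit at distinct vertices and are independent fresh choices. Since $w(x,y)=1$ forces all of them to close, this yields the uniform bound $\Pr[w(x,y)=1]\le (2/n)^m=\exp\!\big(-\Omega\big((n/\ell^2)\log n\big)\big)$.

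Feeding this into the union bound gives $\sum_{\ell<L}4\cdot 3^{\ell-1}\exp(-\Omega((n/\ell^2)\log n))$, and every term is negligible once $\ell\log 3\ll (n/\ell^2)\log n$, i.e.\ $\ell^3\ll n\log n$; taking $L$ a small multiple of $(n\log n)^{1/3}$, in particular $L=\Omega(n^{1/3})$, makes the sum tend to $0$ and proves the theorem. I expect the main obstacle to be the degenerate configurations in which the greedy construction cannot produce $n/\ell^2$ \emph{self-avoiding} trajectories, since then the closing edges need not be fresh. This happens only when $x$ or $y$ has atypical cycle structure (for the pure power $w=x^k$ a trajectory simply follows a cycle of $x$, and self-avoidance fails exactly when many cycle lengths divide $k$), and I would handle these cases separately by noting that they force so much structure that $\Pr[w(x,y)=1]$ is in fact far smaller — for instance $\Pr[x^k=1]=\exp(-(n/k)(\log n)(1+o(1)))$ — so they are comfortably absorbed into the same sum. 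Making this dichotomy, and the edge-blocking count underlying it, rigorous and genuinely uniform over all $w$ is where the real work lies.
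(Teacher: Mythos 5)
Your overall architecture --- a union bound over the at most $4\cdot 3^{\ell-1}$ reduced words of each length, reduction to cyclically reduced words, and a per-word bound obtained by lazily exposing $x$ and $y$ along vertex trajectories, with reducedness guaranteeing that each step is a fresh uniform choice --- is exactly the skeleton of the paper's proof. The divergence is in the per-word estimate. The paper does not ask any trajectory to be self-avoiding: it uses the implication that $w(x,y)=1$ forces \emph{every} trajectory to return to its start, hence to be non-injective, and bounds the conditional probability of non-injectivity of each of $m$ mutually disjoint trajectories by roughly $m\ell^2/(n-m\ell)$, which is a constant for $m=\lfloor n/(4\ell^2)\rfloor$; this gives $P_{S_n}(w)\le \exp(-cn/\ell^2)$ and hence the stated $\Omega(n^{1/3})$. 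You instead want to retain only starting points whose partial trajectories are self-avoiding, so that each closing edge is a fresh uniform value costing $2/n$, aiming at the stronger bound $(2/n)^m=\exp(-\Omega((n/\ell^2)\log n))$ (which would even give girth $\Omega((n\log n)^{1/3})$).

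That is where your proof has a genuine gap, and it is the one you flag yourself. Your greedy count only charges a discarded starting point to an edge of a previously retained trajectory; it never accounts for starting points discarded because their \emph{own} partial trajectory fails to be self-avoiding, and nothing bounds how many of those there are. Worse, this failure mode is concentrated exactly on the event you are trying to bound. Take $w=x^\ell$ with $\ell$ even, and consider permutations $x$ all of whose cycles have length $\ell/2$: then $w(x,y)=1$, yet no starting point has a self-avoiding partial trajectory, the greedy procedure retains nothing, and the bound $(2/n)^m$ says nothing about this part of the probability space. So $\Pr[w(x,y)=1]\le(2/n)^{n/(2\ell^2)}$ simply does not follow from your argument; you would additionally need to bound the probability that fewer than $m$ trajectories are retainable, uniformly over all words of length $\ell$. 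Your proposed dichotomy (atypical cycle structure of $x$ or $y$) cannot do this for general $w$: already for $w=v^j$ a proper power, the relevant structure is the cycle structure of the random permutation $v(x,y)$, not of the generators themselves, and controlling such events uniformly is precisely the difficulty that sank the claimed $\Omega(\sqrt{n\log n})$ bound of \cite{ghssv}, as described in Section~\ref{sec:bug}: one cannot extract a $1/n$-per-trajectory closing probability without paying for full injectivity control. The fix that works is to drop the self-avoidance requirement and use your own observation in reverse --- $w(x,y)=1$ forces every trajectory to be non-injective --- accepting a constant instead of $2/n$ per trajectory; that is the paper's proof, and it is what yields the $\Omega(n^{1/3})$ of the statement.
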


In \cite{ghssv} it is claimed that $\Cay(S_n,S)$ almost surely has girth at least $\Omega(\sqrt{n \log n})$. Unfortunately, there is a hole in the proof (also reproduced in \cite{ellis-linial}). We explain the bug in Section~\ref{sec:bug} for the benefit of the interested reader. Thus, modest though it is, Theorem~\ref{main-thm} appears to be the best known lower bound for this problem.

\section{The proof}

We broadly follow the claimed proof in \cite{ghssv}, weakening the claims where necessary.

As a slight generalization we consider $d$ random generators $\pi_1,\dots,\pi_d \in S_n$, for any fixed $d\geq 2$. Let $S = \{\pi_1^{\pm1}, \dots, \pi_d^{\pm1}\}$. We claim the following.

\begin{theorem}\label{main-thm-d}
With high probability the girth of $\Cay(S_n,S)$ is at least
\[
\Omega\(\(\frac{n}{\log(2d-1)}\)^{1/3}\).
\]
\end{theorem}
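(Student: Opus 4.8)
The plan is to bound the expected number of short cycles and apply a first-moment (union bound) argument. A cycle of length $\ell$ through the identity corresponds to a reduced word $w \in F_d$ of length $\ell$ with $w(\pi_1,\dots,\pi_d) = 1$. Since the graph is vertex-transitive, it suffices to count relations at the identity. The number of reduced words of length $\ell$ in $F_d$ is at most $2d(2d-1)^{\ell-1}$, so the key quantity to control is, for a fixed reduced word $w$, the probability that $w(\pi_1,\dots,\pi_d)$ fixes everything, i.e. equals the identity permutation.

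Let me think about how to estimate $\P[w(\pi_1,\dots,\pi_d) = 1]$ for a fixed reduced word $w$ of length $\ell$. The natural approach is to track the trajectory of a single point: for a uniformly random point $i \in \{1,\dots,n\}$, I would estimate the probability that $w$ fixes $i$, and then argue that if $w = 1$ then every point is fixed. The trajectory $i \mapsto \pi_{\pm}^{\pm 1}(i) \mapsto \cdots$ under the successive letters of $w$ performs a kind of non-backtracking walk, and as long as the walk visits fresh images (points not previously seen), each new application of a generator sends us to an essentially uniform new point, so returning to the start after $\ell$ steps is an event of probability roughly $1/n$ per point, giving $\P[\text{$i$ fixed}] \approx 1/n$ while the walk explores freely. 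The subtlety is that the generators are fixed random permutations reused across the word, so coincidences (the walk revisiting an earlier point or an earlier edge of the word) create correlations; the heart of the argument is showing that for $\ell = O((n/\log(2d-1))^{1/3})$ such coincidences are rare enough that the trajectory behaves like a fresh random walk with high probability.

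Concretely, I would reveal the values of the permutations lazily along the trajectory, so that each time the walk applies a generator to a genuinely new point it lands on a uniformly random unused point. Define a ``collision'' as the event that two distinct partial trajectories, or the trajectory from a single point returning early, hit a previously revealed value. The expected number of collisions over a length-$\ell$ word on $n$ points scales like $\ell^2/n$ (birthday-paradox heuristic), and I expect the relevant failure probability for the whole word to be controlled by something like $\ell^3/n$ after summing over points or over positions in the word. Setting this against the entropy $(2d-1)^\ell$ of the word count, the union bound
\[
\E[\#\text{cycles of length} \le \ell] \le \sum_{k\le \ell} 2d(2d-1)^{k-1}\,\P[w_k = 1]
\]
should go to $0$ precisely when $\ell^3 \lesssim n/\log(2d-1)$, which pins down the exponent $1/3$ and the $\log(2d-1)$ denominator.

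The main obstacle will be making the collision analysis rigorous: controlling $\P[w(\pi_1,\dots,\pi_d)=1]$ uniformly over all reduced words, including degenerate words that traverse the same generator many times or have trajectories that self-intersect, where the lazy-sampling independence breaks down. This is presumably exactly where the gap in \cite{ghssv} lies, since their stronger bound $\sqrt{n\log n}$ would require pushing the collision estimate much further than $\ell^3 \lesssim n$; so I would be careful to prove only the weaker quantitative claim that $\P[w=1]$ decays fast enough to beat $(2d-1)^\ell$ up to $\ell \sim (n/\log(2d-1))^{1/3}$, rather than attempting the sharper tracking of trajectory statistics that the failed argument needed.
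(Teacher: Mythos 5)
Your overall strategy --- a first-moment bound over reduced words together with lazy revelation of the permutations along trajectories --- is the same as the paper's, but the quantitative core is missing, and the arithmetic by which you extract the exponent $1/3$ does not work. A per-word bound of the shape $\P[w(\pi_1,\dots,\pi_d)=1] = O(\ell^3/n)$, which is all that your single-trajectory collision analysis (or any additive ``sum over points'' version of it) can give, is far too weak against the entropy of the word count: for $\sum_{k\le\ell} 2d(2d-1)^{k-1}\cdot O(\ell^3/n)$ to tend to $0$ you need $(2d-1)^\ell \ll n$, i.e.\ $\ell = O(\log n/\log(2d-1))$, which is a logarithmic girth bound, not $n^{1/3}$. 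The threshold $\ell^3 \ll n/\log(2d-1)$ that you claim does not follow from your stated bound; it follows only if the per-word probability is exponentially small, of the shape $\exp(-cn/\ell^2)$, for then the union bound reads $\exp(\ell\log(2d-1) - cn/\ell^2)$, which tends to $0$ exactly in the regime $\ell^3 \ll n/\log(2d-1)$.

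The missing idea is multiplicative amplification over many starting points, exploiting that $w(\pi_1,\dots,\pi_d)=1$ forces \emph{every} point to be fixed, not just one. Track $m \approx n/(4\ell^2)$ starting points $x_1,\dots,x_m$, each chosen outside the union of the earlier trajectories. Conditional on $x_1,\dots,x_{i-1}$ all being fixed by $w$, the event that $x_i$ is fixed is contained in the event that the trajectory of $x_i$ fails to be injective or hits an earlier trajectory (if the trajectory stays fresh, its final point is previously unseen, hence distinct from $x_i$); under lazy revelation this conditional probability is at most $m\ell^2/(n-m\ell) \le 1/2$. Multiplying these conditional bounds over $i=1,\dots,m$ --- rather than summing over points, as you propose --- gives $\P[w(\pi_1,\dots,\pi_d)=1] \le 2^{-m} = \exp(-\Omega(n/\ell^2))$, which is exactly what the union bound needs. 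Note finally that your heuristic that ``returning to the start after $\ell$ steps is an event of probability roughly $1/n$ per point'' is precisely the trap described in Section~\ref{sec:bug}: the returning step can be forced by earlier coincidences in the trajectory, so the only event one can rigorously bound is the collision event, of probability about $\ell^2/n$ per point; the strength of the argument comes from taking the product of these small-but-not-tiny probabilities over about $n/\ell^2$ starting points.
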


Write $a_1,\dots,a_d$ for the generators of $F_d$. If $w\in F_d$ and $\pi_1,\dots,\pi_d\in S_n$ we write (as we have done already) $w(\pi_1,\dots,\pi_d)$ image of $w$ under the substitution $a_1\mapsto \pi_1,\dots, a_d\mapsto \pi_d$. We write $P_{S_n}(w)$ for the probability that $w(\pi_1,\dots, \pi_d)=1$ when $\pi_1,\dots,\pi_d$ are chosen at random from $S_n$.

\begin{lemma}
Suppose that $w\in F_d$ has length $k>0$. Then for any $m < n/k$ we have
\[
  P_{S_n}(w) \leq \(\frac{mk^2}{n-mk}\)^m.
\]
In particular for a constant $c>0$ we have
\[
  P_{S_n}(w) \leq \exp(-cn/k^2).
\]
\end{lemma}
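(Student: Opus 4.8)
The plan is to expose the random permutations $\pi_1,\dots,\pi_d$ lazily, by the principle of deferred decisions, tracing the $w$-trajectory of one point at a time. Write $w=s_1\cdots s_k$ in reduced form, with each $s_t\in\{a_1^{\pm1},\dots,a_d^{\pm1}\}$. For a point $v$ its trajectory is $p_0=v,p_1,\dots,p_k$ with $p_t=s_t(\pi)(p_{t-1})$, and $w(\pi)=1$ forces $p_k=v$ for every $v$. I will only use the closure of $m$ of these trajectories, so that $P_{S_n}(w)\le P(\text{the trajectories of $m$ suitably chosen roots all close})$; the whole task is to bound the latter.

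Concretely, I maintain a partial bijection for each $\pi_i$ and process roots adaptively: repeatedly pick a point $v_j$ that is currently \emph{untouched}, and trace its trajectory, revealing an image only when it is not yet determined, in which case it is uniform over the $\ge n-(\text{number of touched points})$ still-available values. A closed trajectory of length $k$ visits at most $k$ distinct points, so while tracing trajectory $j$ (with trajectories $1,\dots,j-1$ already closed) at most $jk\le mk$ points are ever touched; this is where the hypothesis $m<n/k$ enters, guaranteeing both that an untouched root still exists and that every denominator $n-(\text{touched})\ge n-mk$ is positive. Writing the target probability as the telescoping product
\[
\prod_{j=1}^{m} P\big(\text{trajectory $j$ closes}\ \big|\ \text{trajectories $1,\dots,j-1$ closed}\big),
\]
it remains to bound each factor by $mk^2/(n-mk)$.

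The heart of the argument is this per-trajectory estimate. Fix $j$ and condition on the past. Since $v_j$ is untouched, its first step is a genuine reveal, so the trajectory has at least one fresh reveal; let $\tau$ be the position of the \emph{last} one. If $\tau=k$, closure demands that the fresh, uniformly distributed value $p_k$ equal the one specific point $v_j$, which has conditional probability at most $1/(n-mk)$. If $\tau<k$, then steps $\tau+1,\dots,k$ all follow previously revealed edges; as a composite of partial bijections this tail map is injective, so at most one value of $p_\tau$ is sent to $v_j$, and again the fresh reveal at step $\tau$ hits it with probability at most $1/(n-mk)$. Summing over the at most $k$ possible positions of $\tau$ gives a bound $k/(n-mk)\le mk^2/(n-mk)$ for each factor, and multiplying the $m$ of them yields the claim. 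The ``in particular'' then follows by optimizing: taking $m\asymp n/k^2$ (which keeps $mk\le n/2$ and $m<n/k$) pushes the base below $1/e$, so $P_{S_n}(w)\le e^{-m}=\exp(-cn/k^2)$.

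The step I expect to be the main obstacle is exactly the case $\tau<k$: a trajectory can in principle return to its root using only edges revealed earlier, imposing no constraint at its final step, so one cannot simply argue that the last step must luckily close. Tracking the last fresh reveal and invoking injectivity of the partially revealed permutations is what tames this ``free closure'', and the union bound over the $k$ possible locations of that reveal is precisely what produces the extra factor of $k$, hence the $k^2$.
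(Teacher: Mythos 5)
Your global structure (lazy revelation of the permutations, $m$ trajectories started at untouched roots, the telescoping product over conditional closure probabilities, the role of $m<n/k$) matches the paper's proof, but your per-trajectory estimate does not, and it has a genuine gap at exactly the step you flagged as the crux. The claim that for $\tau<k$ ``at most one value of $p_\tau$ is sent to $v_j$'' is false, because the determined tail is not a map fixed before the reveal at step $\tau$: it may use the very edge $s_\tau\colon p_{\tau-1}\mapsto p_\tau$ being revealed, in particular in the reverse direction at some later step (reducedness only forbids this at step $\tau+1$). Since that edge depends on the value $p_\tau$, the ``tail map'' changes with $p_\tau$, and injectivity of each fixed partial bijection gives you nothing. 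Concretely, take $d=3$ and the reduced word $w=s_1\cdots s_{10}$ with
\[
  (s_1,\dots,s_{10})=(b^{-1},c,b,c,b,c,a,b,a^{-1},b),
\]
root $v_j=1$, and suppose the first six (all fresh) reveals give the partial trajectory $1\to 2\to 5\to 5\to 7\to 7\to 2$, so the revealed edges are $b\colon 2\mapsto 1,\ 5\mapsto 5,\ 7\mapsto 7$ and $c\colon 2\mapsto 5,\ 5\mapsto 7,\ 7\mapsto 2$, with nothing yet revealed about $a$. Step $7$ is the fresh reveal $u=a(2)$, and the remaining letters are $b,a^{-1},b$. If $u=5$, every later step is determined and the trajectory closes: $5\xrightarrow{b}5\xrightarrow{a^{-1}}2\xrightarrow{b}1$, the middle step using the new edge $a\colon 2\mapsto 5$ backwards; if $u=7$, the same happens: $7\xrightarrow{b}7\xrightarrow{a^{-1}}2\xrightarrow{b}1$. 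So at least two values of the fresh reveal at $\tau=7$ give determined closure, and padding the front of the word with more $b$-fixed-point visits makes this $\Theta(k)$ values. Your per-factor bound $k/(n-mk)$ is therefore not established.

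This pitfall is, in thin disguise, the bug in \cite{ghssv} that Section~\ref{sec:bug} of the paper is devoted to: a trajectory can close entirely along already-revealed edges, and it is hard to bill that event to any single reveal. The paper sidesteps the issue by bounding the closure probability by the probability that the trajectory fails to be injective (or meets earlier trajectories), an event whose step-by-step conditional probabilities are honestly bounded, and this is exactly what costs the weaker factor $mk^2/(n-mk)$ rather than your $k/(n-mk)$. Your argument can be repaired, but only up to the paper's bound: when $\tau<k$, for step $\tau+1$ to be determined the value $u=p_\tau$ must either equal $p_{\tau-1}$ (the only way the new edge can act at step $\tau+1$, since $s_{\tau+1}\neq s_\tau^{-1}$) or lie in the domain of the already-revealed $s_{\tau+1}$-edges, a set of size at most roughly $mk$; summing $(mk+1)/(n-mk)$ over the at most $k$ positions of $\tau$, plus $1/(n-mk)$ for $\tau=k$, recovers precisely the lemma's factor $mk^2/(n-mk)$. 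Whether your stronger per-factor bound is actually true is a separate question that your proof does not settle.
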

\begin{proof}
Write
\[
  w = w_k \cdots w_1,
\]
where each $w_i \in \{a_1^{\pm 1}, \dots, a_d^{\pm 1}\}$ and $w_i \neq w_{i-1}^{-1}$ for each $i>1$. Let $\pi_1, \dots, \pi_d \in S_n$ be random.

Pick $x_1 \in \{1,\dots,n\}$ arbitrarily, and examine the letter-by-letter trajectory $x_1^0,\dots,x_1^k$ of $x_1$ under $w$, revealing the values of $\pi_1,\dots,\pi_d$ on a need-to-know basis. Explicitly,
\begin{align*}
 x_1^0 &= x_1,\\
 x_1^j &= w_j(\pi_1, \dots, \pi_d) x_1^{j-1} \qquad (0 < j \leq k).
\end{align*}
Note that $w$ fixes $x_1$ if and only if $x_1^k = x_1$, but in fact it's rather more likely that $x_1^0, \dots, x_1^k$ are all distinct. Indeed, conditional on $x_1^0, \dots, x_1^{j-1}$ all being distinct, the value of $w_j(\pi_1, \dots, \pi_d) x_1^{j-1}$ is still unexposed, since the only thing we have revealed about $x_1^{j-1}$ so far is that $w_{j-1}(\pi_1,\dots,\pi_d) x_1^{j-2} = x_1^{j-1}$, and $w_j \neq w_{j-1}^{-1}$. Thus $x_1^j$ will be drawn from a pool of at least $n - j + 1$ points, so
\[
  \P(x_1^j \in \{x_1^0, \dots, x_1^{j-1}\} | x_1^0, \dots, x_1^{j-1}) \leq \frac{j}{n-j+1}.
\]
Thus by a union bound we have
\[
  \P(x_1^0, \dots, x_1^k~\text{not all distinct}) \leq \sum_{j=1}^k \frac{j}{n-j+1} \leq \frac{k^2}{n-k}.
\]

Supposing nevertheless that $x_1^k = x_1$, we may still pick $x_2 \notin \{x_1^0, \dots, x_1^k\}$ and repeat the argument. In fact, suppose we have done this $m-1$ times already, and we have just chosen $x_m \notin \bigcup_{i<m} \{x_i^0, \dots, x_i^k\}$. Define the trajectory $x_m^0, \dots, x_m^k$ as before. Assuming $x_m^0, \dots, x_m^{j-1}$ are all distinct and disjoint from $\bigcup_{i<m} \{x_i^0, \dots, x_i^k\}$, $x_m^j$ will be drawn from a pool of at least $n - (m-1)k - j + 1$ points, so the probability that
\[
  x_m^j \in \{x_m^0, \dots, x_m^{j-1}\} \cup \bigcup_{i<m} \{x_i^0, \dots, x_i^k\}
\]
is at most
\[
  \frac{(m-1)k + j}{n - (m-1)k - j + 1} \leq \frac{mk}{n - mk}.
\]
Thus the probability that the trajectory $x_m^0, \dots, x_m^k$ fails to be injective, or fails to avoid $\bigcup_{i<m} \{x_i^0, \dots, x_i^k\}$, is at most
\[
  \frac{mk^2}{n-mk}.
\]

In order to have $w(\pi_1,\dots,\pi_d) = 1$ we must have $x_i^k = x_i$ for each $i$. Thus we have
\[
  P_{S_n}(w) \leq \prod_{i=1}^m \P(x_i^k = x_i | x_1^k = x_1, \dots, x_{i-1}^k = x_{i-1}) \leq \( \frac{mk^2}{n-mk}\)^m.
\]
Put $m = \floor{n/(4k^2)}$ to get
\[
  P_{S_n}(w) \leq 2^{-n/(4k^2) + 1}.\qedhere
\]
\end{proof}

\begin{proof}[Proof of Theorem~\ref{main-thm-d}]
Let $W_k \subset F_d$ be the set of all words of length at most $k$. With high probability, the elements $\pi_1^{\pm1},\dots,\pi_d^{\pm1}$ are all distinct. Thus $\Cay(S_n,S)$ has girth at most $k$ if and only if there is some nontrivial $w\in W_k$ such that $w(\pi_1,\dots, \pi_d)=1$. The probability of this event is bounded by
\[
  \sum_{\substack{w\in W_k\\ w\neq 1}} P_{S_n}(w) \leq |W_k| \max_{\substack{w\in W_k\\ w\neq 1}} P_{S_n}(w) \leq (2d) \cdot (2d-1)^{k-1} \cdot \exp(-cn/k^2).
\]
If we put
\[
  k = \floor{\(\frac{cn}{2 \log(2d-1)}\)^{1/3}},
\]
then we get
\[
  \P(\operatorname{girth}(\Cay(S_n,S)) \leq k) \leq \exp\(-cn^{1/3} \log(2d-1)^{1/3}\),
\]
so the theorem follows.
\end{proof}

\section{The bug in \cite{ghssv}}\label{sec:bug}

For simplicity take $d=2$. In the previous section we attempted to show that $w(\pi_1,\pi_2)\neq 1$ by trying to find a point $x$ such that the trajectory given by $w(\pi_1,\pi_2)$ acting on $x$ is injective. The proof of \cite[Section~3]{ghssv} attempts to improve on this by just finding a point not returning to its starting point. The proof goes roughly as follows:
\begin{enumerate}
  \item We may assume that $w$ is cyclically reduced, i.e., that $w_1 \neq w_k^{-1}$ (this is true).
  \item Assuming $w$ is cyclically reduced and that $w(\pi_1,\pi_2) = 1$, there must be some index $j>0$ for which both (a) $x_1^j = x_1$ and (b) $w_j \neq w_1^{-1}$ (this is also true).
  \item We can bound the probability of this happening for the first time at step $j$, conditional on the trajectory up to $j-1$, by $1/(n-j+1)$ (this is false).
\end{enumerate}

To see how this goes wrong, consider for example the word
\[
  w = a b a^{-1} b a b^{-1},
\]
and suppose the trajectory of $1$ turns out to be
\[
  1 \xlongrightarrow{b^{-1}} 2 \xlongrightarrow{a} 2 \xlongrightarrow{b} 1 \xlongrightarrow{a^{-1}} 3 \xlongrightarrow{b} 3 \xlongrightarrow{a} 1.
\]
The first step at which the trajectory returns to $1$ by a letter other than $w_1^{-1} = b$ is the last step, but by this point the transition is determined, so we do not have any bound on the conditional probability.

In brief, the method of \cite[Section~3]{ghssv} attempts to short-circuit the injectivity approach by focusing just on the most important event, that of returning to the starting point. Unfortunately it seems rather difficult to control this event without demanding complete injectivity of the trajectory.

Still, it seems reasonable that some argument of this form should be able to prove a lower bound of the claimed form $\Omega(\sqrt{n \log n})$.

\bibliography{cayley-girth}
\bibliographystyle{plain}

\end{document}